\newcommand{\mylabel}[1]{\ifthenelse{\boolean{showlabels}}{{\tt{[{#1}]}}\label
{#1}}{\label{#1}}}
\theoremstyle{plain}
\newtheorem{thm}{Theorem}
\newtheorem{lemma}{Lemma}
\newtheorem{proposition}{Proposition}
\theoremstyle{definition}
\newtheorem{defn}{Definition}
\numberwithin{equation}{section}
\newcommand{\Irr}{\operatorname{Irr}}
\newcommand{\F}{\mathbf{F}}
\newcommand{\C}{\mathbf{C}}
\newcommand{\p}{\mathcal{P}}
\newcommand{\Fst}{\F^{s \times t}}
\begin{document}

\title{Counting characters of small degree in upper unitriangular groups}
\author{Maria Loukaki}

\address{Department of Mathematics,
 University of Crete,
71409 Iraklio Crete,  
Greece}
\email{loukaki@gmail.com}

\begin{abstract}
Let $U_n$ denote the group of upper $n \times n$ unitriangular matrices over a fixed finite field $\F$ of order $q$. 
That is, $U_n$ consists of upper triangular $n \times n$ matrices having every diagonal entry equal to $1$.
It is known that the degrees of all irreducible complex characters of $U_n$ are powers of $q$.
 It was conjectured by Lehrer that the number of irreducible characters of $U_n$  
of degree  $q^e$ is an integer  polynomial in $q$ depending only on $e$ and $n$.
 We show that there exist recursive (for $n$) formulas that this number satisfies when $e$ is one of $1, 2$
 and $3$, and thus show that the conjecture is true in those cases.
\end{abstract}

\maketitle

\section{Introduction}
We fix a prime $p$. 
Let $q$ be a fixed power of $p$ and $\F_q = \F$ the  finite field of order $q$. We write $U_n(q)= U_n$ for 
 the group of upper triangular $n \times n$ matrices over $\F$, whose diagonal elements are 
  all  equal to $1$. We also write $GL_n(q)$ for the general linear  group of all $n \times n$  
invertible matrices over $\F$ and note that $U_n(q)$ is a $p$-Sylow subgroup of $GL_n(q)$. 
Furthermore, for every finite group $G$ and every integer $k$ we write 
$$
N_k(G)= |\{ \chi \in \Irr(G) \mid \chi(1) = k\}|,  
$$
for the number of irreducible characters of $G$ of degree $k$. 

In 1974 G. I. Lehrer, see \cite{leh}, conjectured two results. First, he  claimed that the
 degrees of the irreducible representations of $U_n$  are of type $q^e$ for some $e \in \{ 0, 1, \cdots, \mu(n)\}$, where 
$$
\mu(n)=\begin{cases}
m(m-1) \text{  if $n =2m $  and } \\
m^2  \text{ if $n= 2m+1$.} 
\end{cases}
$$ 
Next he conjectured that for any fixed $n$,  the number of irreducible characters of $U_n$ whose  degree  is  $q^e$, i.e., $N_{q^e}(U_n)$ in our notation,  is an integer polynomial in $q$ depending only on $e$.

As far as the first of his conjecture is concerned, 
it was shown  by M. Isaacs \cite{isa3},  that every irreducible character of $U_n$ has degree a power of $q$.
In addition, B. Huppert,   \cite{hup},  proved that the degrees of  the irreducible characters of $U_n$ 
is exactly the set $\{ q^e \mid 0 \leq e \leq \mu(n) \}$.

As for the second part of his conjecture, it still remains open apart for some specific values of $e$.

The case $e=0$ is well known and easy to compute,  that is,   $N_1(U_n(q))=N_1(U_n)= q^{n-1}$.
For greater values of $e$,  M. Marjoram  \cite{mar} provided some first  formulas. In particular, he proved that there exist formulas for   the number of irreducible characters having one of the 
 next two lowest degrees,  that  is  $N_{q}(U_n)$ and $N_{q^2}(U_n)$. 
Also in his unpublished thesis  \cite{mar2}, M. Marjoram 
 established formulas for  the three highest degrees  when $n=2m$ is even, that is $N_{q^{\mu(n)}}(U_{2m})$,
$N_{q^{\mu(n)-1}}(U_{2m})$ and $N_{q^{\mu(n)-2}}(U_{2m})$, as well as a formula for the 
number of irreducible characters of highest degree when $n $ is  odd, that is $N_{q^{\mu(n)}}(U_{2m+1})$.

In addition, I. M. Isaacs,  in his paper \cite{isa2},  using a different method, constructed specific polynomials for the  number of 
irreducible characters of $U_n(q)$  of degree $ q, q^{\mu(n)}$ and $q^{\mu(n)-1}$. He also suggested   a stronger form a Lehrer's conjecture (see Conjecture B in \cite{isa2}), that  the functions $N_{q^e}(U_n)$ are   polynomials in $q-1$  with nonnegative integer coefficients.

In this paper we use an elementary method to construct   recursive formulas that  the number of irreducible characters of  degree $q, q^2$ and  $q^3$ satisfy and thus we verify Lehrer's conjecture.
(Of course the cases $q$ and $q^2$ were already known  by Marjoram's formulas.)
In a forthcoming paper 
we prove analogous recursive formulas for  the degrees $q^{\mu(n)}, q^{\mu(n)-1}$ and $   q^{\mu(n)-2}$.

We follow the notation used in \cite{isa1}. In addition, 
for any matrix $X= (x_{i, j})  \in GL_n(q)$ we write $R_i(X)$   for its $i$-row  written as an $1 \times n$ matrix. We also write  $C_j(X)$ for its $j$-column 
written as an $n \times 1$ matrix.
Also if $A= (a_{i, j}) \in U_n $ then we say that its $i$-row is trivial if the only nonzero element in that row is the diagonal element $a_{i, i}=1$.
 Similarly, we say that the $j$-column of $A$ is trivial if every entry in the $j$-column of $A$ is $0$ except $a_{j,j}=1$.
We will often consider the additive abelian group  of the $st$-dimensional vector space $\F^{st}$ (of order $q^{st}$)
as  the additive group of  all $s \times t$ matrices over $\F$. When viewed as such we write it as $\F^{s \times t}$.

{\bf Acknowledgments}
Most of this work was done will I was visiting Georgia Institute of Technology. I would like to thank the Math 
department for its hospitality.   I'm  also grateful to the referee  whose suggestions improved dramatically the original paper. The short proof of Theorem 1 is his contribution. Furthermore, as he pointed out
the recursive formulas we prove below for $N_{q^3}(U_n)$  (see  \eqref{nq13}, \eqref{q^3.1}  and  \eqref{q^3.2})
imply that $N_{q^3}(U_n)$ is a polynomial function in $q-1$ with nonnegative integer coefficients. Hence 
 Isaacs conjecture holds for $e=3$. (Of course it holds for $e=1$ and $e=2$).

\section{Orbits of unitriangular actions on $\Fst$ }\mylabel{s1}

The aim of this section is to compute the orbits of a specific action of $H=U_s \times U_t$ on $\Fst$.

\begin{defn}\mylabel{df1} 
Let $T$ be an  $s \times t$ matrix over $\F$. 
We call  $T$   {\bf quasimonomial } if it has  at most 
one non-zero entry  in every column and  row.
\end{defn}
We write $ E_{i,j} $ for the matrix that has $1$ in it's $(i,j)$-entry and $0$ everywhere else. Clearly $E_{i,j}$ is quasimonomial. Furthermore, every nonzero quasimonomial  matrix $T$ can be written as  
\begin{equation}\mylabel{eq1}
T= f_{1}E_{i_1, j_1} +f_{2}E_{i_2, j_2}+ \cdots +f_{k}E_{i_k, j_k}
\end{equation}
 with $j_1 < j_2 < \cdots < j_k$, all $i_1, \dots , i_k$ distinct, 
  and  $f_1, \dots, f_k$ non-zero  elements in $\F$. We call \ref{eq1} the  {\bf standard form} of
the non-zero   $T$ and we  say that $k$  is the {\bf  length of} $T$. 

\begin{thm}\mylabel{thm1}
 Assume that the group $H= U_s \times U_t $ acts on $\Fst$
in the following way
$$
X^{(A , B)} = A^{-1} X  B, 
$$
for all $ X \in \Fst, A \in U_s$ and $ B \in U_t$.
Then the set of distinct quasimonomial matrices  in $\Fst$ forms a complete set of orbit representatives of 
the action of $H$ on $\Fst$. 
\end{thm}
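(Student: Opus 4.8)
The plan is to prove the two assertions contained in the statement — that every $H$-orbit on $\Fst$ contains a quasimonomial matrix, and that it contains no more than one — and to handle both by induction on $s+t$, in each case peeling off the last row of the matrix together with the column carrying its unique nonzero entry (when there is one). The starting point is a dictionary between the action and elementary operations: because $A^{-1}$ is again upper unitriangular when $A\in U_s$ is, left multiplication by $A^{-1}$ adds to the $i$-th row of $X$ an arbitrary $\F$-linear combination of the rows of $X$ of index $>i$, and dually right multiplication by $B\in U_t$ adds to the $j$-th column an arbitrary linear combination of the columns of index $<j$. I will use two immediate consequences: the last row $R_s(X)$ is fixed by every element of $U_s$, and it is changed only through the right action of $U_t$.

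For existence, let $X\in\Fst$. If $R_s(X)=0$, then $X$ lies in the subspace of matrices with zero last row, on which the $H$-action factors through $U_{s-1}\times U_t$, and the inductive hypothesis applies to the top $s-1$ rows. If $R_s(X)\ne 0$, let $j$ be the smallest column index with a nonzero entry in row $s$; first I would use column $j$ to clear the other entries of row $s$ (these lie in columns of larger index, so this is an allowed right operation), and then use row $s$ to clear the other entries of column $j$ (row $s$ has larger index than all of them, so this is an allowed left operation). The resulting matrix has its $(s,j)$-entry as the only nonzero entry of both its last row and its $j$-th column. Deleting that row and column produces a matrix in $\F^{(s-1)\times(t-1)}$, and the operations that leave the achieved structure intact are exactly the row operations among the rows other than $s$ and the column operations among the columns other than $j$, i.e. the action of $U_{s-1}\times U_{t-1}$ on the complementary submatrix; so the inductive hypothesis makes that submatrix quasimonomial, and reinstating row $s$ and column $j$ gives a quasimonomial representative of the orbit of $X$.

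For uniqueness, suppose $T'=A^{-1}TB$ with $T,T'$ quasimonomial, $A\in U_s$ and $B\in U_t$, and assume $T\ne 0$ (the case $T=0$ being trivial). Rewriting as $AT'=TB$ and reading off last rows gives $R_s(T')=R_s(TB)=R_s(T)B$. If $R_s(T)=0$, then $R_s(T')=0$ as well, the two matrices drop to $(s-1)\times t$ quasimonomial matrices related by $U_{s-1}\times U_t$, and induction finishes. Otherwise $R_s(T)$ has a single nonzero entry $f$ in some column $j$, and $R_s(T)B=f\,R_j(B)$; since $T'$ is quasimonomial this row has at most one nonzero entry, which forces row $j$ of $B$ to be trivial, and hence $R_s(T')=R_s(T)$. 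Quasimonomiality of $T$ and $T'$ together with $f\ne 0$ also forces column $j$ of each to vanish outside row $s$. I would then delete row $s$ and column $j$ from the identity $AT'=TB$: using that $A$ fixes the last row and that row $j$ of $B$ is trivial (so column $j$ of $T$ never contributes to the other columns of $TB$), one checks that the deleted matrices $\tilde T$ and $\tilde T'$ satisfy $\tilde T'=\tilde A^{-1}\tilde T\tilde B$ for the induced elements $\tilde A\in U_{s-1}$ and $\tilde B\in U_{t-1}$. Induction gives $\tilde T=\tilde T'$, and since $T$ and $T'$ already coincide on row $s$ and column $j$, we conclude $T=T'$.

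The step I expect to be the main obstacle, in both parts, is the verification that deleting row $s$ and the distinguished column $j$ really does intertwine the original action with that of $U_{s-1}\times U_{t-1}$ — that the deleted row and column feed into $AT'$ and $TB$ only at the deleted positions and harmlessly there. This is exactly where the two facts singled out at the start do the work: that $A$ acts trivially on the last row, and that quasimonomiality of $T'$ kills the off-diagonal part of row $j$ of $B$. I would also stress why the field entries cannot be set aside: the purely combinatorial invariants of the action — the ranks of the lower-left submatrices of $X$ — pin down the \emph{positions} of the nonzero entries of the quasimonomial representative but not their \emph{values}, so propagating the equality $R_s(T')=R_s(T)$ through the induction is an essential ingredient rather than bookkeeping one could skip.
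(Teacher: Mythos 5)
Your proof is correct and takes essentially the same approach as the paper: translate the action into admissible row/column operations and induct on the matrix size by peeling off a boundary row (the paper works from the first row and, for uniqueness, compares the $(1,t)$-entry after normalizing the remaining rows and columns, whereas you peel the last row and delete the distinguished column, but the mechanism is the same). The care you take in checking that deletion of row $s$ and column $j$ intertwines the action with that of $U_{s-1}\times U_{t-1}$ is exactly the point the paper's terser argument leaves implicit.
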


\begin{proof}
 Let $X \in \Fst$. We show that by performing admissible transformations we can get a quasimonomial 
matrix. By an admissible transformation we mean adding to a row (respectively a column) 
a multiple of a subsequent row (resp. a previous column). By induction we can suppose that the $(s-1) \times t$ submatrix  of $X$ formed by all rows except the first one is quasimonomial.
Let $x_{i_1, j_1}, \ldots, x_{i_l, j_l}$, $2\leq i_1<  \ldots <i_l$, be the non-zero  elements in this submatrix.
Then  we can suppose that $x_{1, j_1}= \ldots = x_{1, j_l}=0$. If the  rest of elements in the first row are now zero we are done. Otherwise let $x_{1,j}$ be the first non-zero element in the first row. Then, except for 
$x_{1,j}$,  the $j$th column is zero and, by performing admissible column transformations, we can have that $x_{1,j}$ is the unique non-zero element in the first row, and thus obtain a quasimonomial matrix. 

To prove uniqueness, we argue again by induction on $s$ and $t$. If $X, Y$ are quasimonomial 
matrices in the same orbit, we can suppose that the last $s-1$ rows and the first $t-1$ columns of $X$ and $Y$ are the same. We only need to show that $x_{1,t}=y_{1,t}$. If some element in the first row or in the last column different from the $(1,t)$-entry is non-zero, then $x_{1,t}=y_{1,t}=0$ and $X=Y$. Otherwise comparing 
the $(1,t)$-entry in $XB=AY$ we get  $x_{1,t}=y_{1,t}$ and $X=Y$.
 
\end{proof}

When a first version of this paper appeared,  Vera-L\'{o}pez, Arregi and  Ormaetxea  
told me (I thank them for that) about a more general result concerning conjugacy classes in unitriangular groups (see \cite{Va1}, \cite{Va2}, \cite{Va3}), whose special case is  Theorem \ref{thm1}.

\section{Irreducible characters in $U_n$}
In this section we will show how  Section \ref{s1} is connected to Lehrer's conjecture. 
We  follow  Marjoram's  approach on the problem, and Proposition \ref{pr1} below follows
from his paper \cite{mar}.

For a fixed but arbitrary integer $n$ we consider  the upper unitriangular group $U_n$ over $\F_q$, and  its  two subgroups 
$M_{n,t}$  and $H_{n,t}$ defined in the following way.
If $1 \leq t \leq n$ and $s = n-t$ then  
$$
M_{n,t}= \{ \left(\begin{matrix}
I_{t} &  C \\
  0     &  I_s       \end{matrix}
\right): C \in \F ^{t \times s} \} = \{ X \in U_n \text{ with  }  x_{i,j} = 0  \text{ if  either \, $i< j \leq t $ 
or $t < i < j$}  \}
$$
 and 
$$
H_{n, t} = \{ \left(\begin{matrix}
B  &  0 \\
  0     &  A       \end{matrix}
\right): A \in U_s  \text{ and } B\in U_t \} = \{ X \in U_n \text{ with  }  x_{i,j} = 0  \text{ if   $i \leq t$
 and  $j > t$}  \}.
$$
It is easy to see that for all $t=1, \dots, n$,  the group  $M_{n, t}$ is an abelian normal subgroup  of $U_n$, 
isomorphic to $\F^{t \times s}$.  In addition, $H_{n,t}$  complements $M_{n,t}$ in $U_n$, and is isomorphic to $U_s \times U_t$.
We identify $H_{n,t}$ with $U_s \times U_t$ and we write its elements   as $(A, B)$ with $A \in U_s$ and $B \in U_t$.
We  also identify the
$M_{n,t}$ with $\F^{t\times s}$, and thus we write the elements of $M_{n,t}$ as $C \in \F^{t \times s}$.
Note that with these identifications,  the conjugation action of $H_{n,t}$ on $M_{n,t}$ in $U = M_{n,t} \rtimes H_{n,t}$  is given as 
\begin{equation}\mylabel{cong.action}
C^{(A,B)}= B^{-1}C A,
\end{equation}
for all $(A,B) \in H_{n,t}$ and $C \in M_{n,t}$. The product that appears at the right hand side of the equation 
above is the standard product of matrices.

Marjoram has given a nice characterization for the  abelian group $\Irr(M_{n,t})$, and the way $H_{n,t}$ acts on that group. 
We collect his results (Lemma 2 and 3 in \cite{mar})  in the next proposition. 
\begin{proposition}\mylabel{pr1}
Let $M_{n, t}$ and $ H_{n,t}$ be  defined as above, for fixed but arbitrary $n $ and $t$.
Then $\Irr(M_{n,t})$ is isomorphic to the  abelian additive group  $\Fst$ of all the $s \times t$ matrices over
 $\F = \F_q$. 
The isomorphism is given by
 $D \in \Fst \to \lambda_{D} \in \Irr(M_{n,t})$, 
where the map $\lambda_{D} : M_{n,t} \to \C$ is defined as  
$$
\lambda_{D}(C) = \omega^{T(tr(DC))},  \text{ for all } C \in M_{n,t}, 
$$
where  $\omega$ is a primitive $p$-root of unity, $T: \F_q \to \F_p$ is the usual trace map  from the extension 
field of $q$ elements $\F_q$ to the ground field $\F_p$ of $p$ elements and $tr(DC)$ denotes the trace of the square 
$s \times s$ matrix $DC$. Furthermore identifying  $H_{n, t} $ with $U_s \times U_t$, we get that the action of $H_{n,t}$ on $\Irr(M_{n,t})$ is given as 
$$
\lambda_{D}^{(A,B)}(C)= \lambda_{D}(C^{(A^{-1}, B^{-1})})=\lambda_{D}(BCA^{-1})=   \omega^{T(tr(DBCA^{-1}))}=
 \omega^{T(tr(A^{-1}DBC))}=\lambda_{A^{-1}DB}(C),
$$
for all $D \in \Fst, C \in \F^{t \times s}$ and all $(A, B) \in U_s \times U_t \cong H_{n,t}$.
Thus $ U_s \times U_t \cong H_{n,t}$ acts  on $\Irr(M_{n,t})\cong \Fst$ as 
\begin{equation*}
D^{(A,B)}=A^{-1}D B.
\end{equation*}
\end{proposition}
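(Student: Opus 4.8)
The plan is to establish the three assertions of Proposition \ref{pr1} in sequence: first that $\Irr(M_{n,t}) \cong \Fst$ via the explicit map $D \mapsto \lambda_D$, then that each $\lambda_D$ is indeed a character, that the map is an isomorphism of abelian groups, and finally that the conjugation action transports to the bilinear action $D^{(A,B)} = A^{-1}DB$. Since $M_{n,t}$ is abelian and isomorphic to $\F^{t \times s}$ as a group, its character group is noncanonically isomorphic to $\F^{t\times s}$, hence has order $q^{ts}$; the content is in the \emph{explicit} description via the trace form.

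First I would verify that $\lambda_D$ is a homomorphism $M_{n,t} \to \C^\times$. Writing elements of $M_{n,t}$ additively as matrices $C \in \F^{t\times s}$ (using the identification from \eqref{cong.action} and the fact that $M_{n,t}$ is abelian with $C_1 \cdot C_2$ corresponding to $C_1 + C_2$), the key identity is that $\operatorname{tr}(D(C_1+C_2)) = \operatorname{tr}(DC_1) + \operatorname{tr}(DC_2)$ by linearity of trace, and then $T$ is additive $\F_q \to \F_p$ and $\omega$ has order $p$, so $\lambda_D(C_1 C_2) = \lambda_D(C_1)\lambda_D(C_2)$. Next, to see $D \mapsto \lambda_D$ is itself additive, the same linearity gives $\operatorname{tr}((D_1+D_2)C) = \operatorname{tr}(D_1 C) + \operatorname{tr}(D_2 C)$, so $\lambda_{D_1+D_2} = \lambda_{D_1}\lambda_{D_2}$. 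For injectivity I would show $\lambda_D \equiv 1$ forces $D = 0$: if $D \neq 0$ has a nonzero entry $d_{k,\ell}$, take $C = E_{\ell, k} \cdot c$ for suitable $c \in \F$; then $\operatorname{tr}(DC) = d_{k,\ell} c$, and one can choose $c$ so that $T(d_{k,\ell}c) \neq 0$ (the trace form $\F_q \times \F_q \to \F_p$ is nondegenerate). Hence the map is injective, and by the order count $|\Irr(M_{n,t})| = q^{ts} = |\Fst|$ it is an isomorphism.

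Finally, for the action: the action of $H_{n,t}$ on $\Irr(M_{n,t})$ is by definition $\lambda^{(A,B)}(C) = \lambda(C^{(A,B)^{-1}})$, and from \eqref{cong.action} we have $C^{(A^{-1},B^{-1})} = B C A^{-1}$ (here one must be careful about which of $A^{-1}$ versus $B$ multiplies on which side; this is read off directly from \eqref{cong.action} with $A,B$ replaced by their inverses). Then $\lambda_D(BCA^{-1}) = \omega^{T(\operatorname{tr}(DBCA^{-1}))}$, and the cyclic invariance of trace, $\operatorname{tr}(DBCA^{-1}) = \operatorname{tr}(A^{-1}DBC)$, rewrites this as $\lambda_{A^{-1}DB}(C)$. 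Thus $\lambda_D^{(A,B)} = \lambda_{A^{-1}DB}$, which under the identification $\Irr(M_{n,t}) \cong \Fst$ is exactly $D^{(A,B)} = A^{-1}DB$.

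The only genuinely delicate point is bookkeeping: keeping straight the transpose/side conventions so that $M_{n,t} \cong \F^{t\times s}$ while $\Irr(M_{n,t}) \cong \F^{s\times t} = \Fst$, and checking that the trace pairing $(D,C) \mapsto T(\operatorname{tr}(DC))$ between $\F^{s\times t}$ and $\F^{t\times s}$ is a perfect (nondegenerate) pairing of $\F_p$-vector spaces — everything else is linearity of trace, additivity of $T$, and cyclic invariance of trace. Since Marjoram proved this (it is Lemmas 2 and 3 of \cite{mar}), I would present the proof compactly, emphasizing the nondegeneracy of the trace form and the cyclic-invariance computation, and otherwise cite \cite{mar} for the routine verifications.
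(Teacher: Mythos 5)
Your proposal is correct, and it follows essentially the same route as the paper: the paper itself gives no independent proof of Proposition \ref{pr1} but cites Lemmas 2 and 3 of \cite{mar}, with the cyclic-trace computation for the action already written out inside the statement exactly as you perform it. Your added details (additivity in $C$ and in $D$, injectivity via nondegeneracy of the pairing $(D,C)\mapsto T(\operatorname{tr}(DC))$, and the order count $|\Irr(M_{n,t})|=q^{ts}$) are the standard verifications being outsourced to \cite{mar}, and they are all sound.
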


 What the above proposition says is that, 
identifying $\Irr(M_{n,t})$ with $\F^{s \times t}$ and $H_{n,t}$ with $U_s \times U_t$,  
 then   Theorem \ref{thm1} provides a complete set of orbit representatives of the action of
 $H_{n, t}$ on $\Irr(M_{n,t})$.
In particular,  
\begin{equation}\mylabel{orbits}
\Omega_{n,t}= \{ T \in \F ^{s \times t} \mid T  \text{ quasimonomial}\}.
\end{equation}
is such a set of representatives.

Now, let    $G$ be any finite group $N$ an  abelian normal subgroup of $G$ and $H$ a complement of $N$ in $G$, then it is easy to characterize the irreducible characters of $G$. In particular, if 
 $\lambda \in \Irr(N)$, and $G_{\lambda}$ is the stabilizer of $\lambda$ in $G$,  then  Gallagher's theorem 
 and  Clifford Theory  implies that 
$\lambda$ extends to $G_{\lambda}$ and a canonical extension $\lambda^e$ is given as $\lambda^e(hn) = \lambda(n)$, 
for all $h \in H_{\lambda}= G_{\lambda} \cap H$ and $n \in N$.  Every character $\Psi \in \Irr(H_{\lambda})$ 
defines a unique irreducible character $\Psi\cdot \lambda^e$ of $G_{\lambda}$ lying above $\lambda$ and inducing irreducibly on $G$. 
Distinct irreducible characters $\Psi \in \Irr(H_{\lambda})$ define distinct irreducible characters $(\Psi\cdot \lambda^e)^G$  of $G$. In addition, every $\chi \in \Irr(G)$ lies above some $\lambda \in \Irr(N)$ and thus $\chi = (\Psi\cdot \lambda^e)^G$, for some $\Psi \in \Irr(H_{\lambda})$. Note that
 $\chi(1)= \Psi(1) (|H|/|H_{\lambda}|)$.

The group $G$  acts on $\Irr(N)$ and divides its members into conjugacy classes. (Observe that the $G$-classes of $\Irr(N)$ 
are also   the $H$-conjugacy classes of $\Irr(N)$.)
Let  $\Omega \subseteq \Irr(N)$  consisting of one representative from   every $G$-conjugacy 
class of irreducible  characters of $N$. 
Then  $$
\Irr(G)= \cup_{\lambda \in \Omega} \{ (\Psi\lambda^e)^G \mid  \Psi \in \Irr(H_{\lambda}) \}. 
$$
Hence if $N_k(G)= |\{ \chi \in \Irr(G) \mid \chi(1)= k\}|$, for any finite group $G$, and any $k= 1,2 \cdots$, then 
\begin{equation*}\mylabel{eq2}
N_k(G)= \sum_{\lambda \in \Omega}N_{\frac{k}{|H:H_{\lambda}|}}(H_{\lambda})= \sum_{\lambda \in \Omega}N_{\frac{k}{|O_{\lambda}|}}(H_{\lambda})
\end{equation*}
where $O_{\lambda}$ is the $H$-orbit of $\lambda$ in $\Irr(N)$.
 
Applying the above argument to the groups $U_n= M_{n,t}\rtimes H_{n,t}$ for any arbitrary but fixed integer $n$ and any $t=1, \dots, n-1$, we conclude that 
\begin{equation}\mylabel{eq.nk}
N_{k}(U_n)= \sum_{T \in \Omega_{n,t}}N_{\frac{k}{|H_{n,t}:H_{n,t,T}|}}(H_{n,t,T})=\sum_{T \in \Omega_{n,t}}N_{\frac{k}{|O_T|}}(H_{n,t,T}) , 
\end{equation}
where  $\Omega_{n,t}$ is the set of quasimonomial matrices in $\Fst$,  $O_T$ is the  $H_{n,t}$-orbit
 of $T \in \Fst \cong \Irr(M_{n,t})$ and 
$H_{n,t,T}$ is the stabilizer of $T$ in  $ H_{n,t} \cong U_s \times U_t $.

{\bf Case 1}: $t=1$.  So $s=n-1$ and  the  groups $H_{n,1}$ and $M_{n,1}$ become  $U_{n-1} \times U_1 \cong U_{n-1}$ 
and $\F^{1 \times n-1}$  respectively. Furthermore, $\Irr(M_{n,1}) \cong \F^{n-1 \times 1}$ 
and 
$\Omega_{n,1}= \{ T \in \F^{n-1, 1} \mid T \text{ quasimonomial} \}$
 consists of the matrices 
$T_i = f E_{i, 1}$, for all $i=1, \dots, n-1$, and $f\ne 0 \in \F$, along with the zero matrix. 
So we get $q-1$ matrices of type $f E_{i,1}$. 
For any $n$ and any $i=1, \dots, n$ we define $P_{n,i}$ as 
$$
  P_{n,i}= \{ A \in U_n \mid C_i(A) \text{ is trivial}\}.
$$
Then it is easy to check that $H_{n,1,T_i}=P_{n-1,i}$ while $|O_{T_i}|= q^{i-1}$.
Thus in view of  equation \ref{eq.nk} we get 
\begin{equation}\mylabel{eq.s1}
N_{k}(U_n)= \sum_{T \in \Omega_{n,1}}N_{\frac{k}{|O_T|}}(H_{n,1,T})= (q-1)\sum_{i=1}^{n-1}N_{\frac{k}{q^{i-1}}}(P_{n-1,i})
 +N_k(U_{n-1}), 
\end{equation}
where the last summand is the contribution of the zero matrix whose orbit size is $1$ and the stabilizer group is $H_{n,1} \cong U_{n-1}$  itself.  For  $k=q^e$,  $e=0,1, \dots, \mu(n)$ the above equation, along with  
 the fact that $P_{n-1, 1}= U_{n-1}$,  implies
\begin{equation}\mylabel{eq.s2}
N_k(U_n)= qN_k(U_{n-1}) + (q-1)\sum_{i=2}^{n-1}N_{\frac{k}{q^{i-1}}}(P_{n-1,i}). 
\end{equation}
Observe  that for $k=1$ equation  \eqref{eq.s2} provides the well known formula
$N_1(U_n) = qN_1(U_{n-1})$, for all $n\geq 2$.

{\bf Case 2}: $t=2$  and thus  $s=n-2$. (Assume $n \geq 4$ for the rest of the section.)
 Now   the  groups $H_{n,2}$ and $M_{n,2}$ become  $U_{n-2} \times U_{2} \cong   U_{n-2} \times \F$ 
and $\F^{2 \times n-2}$  respectively. Furthermore, $\Irr(M_{n,2}) \cong \F^{n-2 \times 2}$ 
and 
$\Omega_{n,2}= \{ T \in \F^{n-2, 2} \mid T \text{ quasimonomial} \}$
 consists of   matrices whose length is either $1$ or $2$ along with the zero matrix. In particular,
 the non-zero 
matrices in $\Omega_{n,2}$ are of the following two types: \\
Those of length $1$, i.e.  $T_{i,j} = f E_{i, j}$,  $j=1, 2$ and $i=1, \dots, n-2$, while $f\ne 0 \in \F$. 
For any fixed $i$ and $j$ we  get   $q-1$ such.
If $j=1$ then $T_{i,1} = fE_{i,1}$, for $i=1, \dots, n-2$. In this case it is left to the reader to check that    $|O_{T_{i,1}}|=q^{i}$ while $
H_{n,2,T_{i,1}}=\{ (A, B) \mid  A \in U_{n-2}, \,  B \in U_{2}  \text{ with $C_{i}(A)$ and $ R_{1}(B)$   trivial } \}$. 
Thus $H_{n,2,T_{i,1}}\cong   P_{n-2, i}$.\\
If $j=2$ then $T_{i,2} = fE_{i,2}$, for some $i=1, \dots, n-2$. In this case $|O_{T_{i,2}}|=q^{i-1}$ while 
$H_{n,2,T_{i,2}}=\{ (A, B) \mid A \in U_{n-2}, \,   B \in U_{2} \text{ with $C_i(A)$ and $ R_{2}(B)$ being  trivial } \} \cong    P_{n-2, i} \times \F$.

The second type are those of length $2$, i.e.,  $T_{i_1, i_2} = f_1E_{i_1, 1} +f_2 E_{i_2, 2}$ for some $i_1\ne i_2$ and $f_1, f_2$ non-zero 
elements in $\F$. We get exactly $(q-1)^2$ such distinct quasimonomial characters.\\
One can easily check that if $i_1 > i_2$, then  $|O_{T_{i_1, i_2}}|=q^{i_1+i_2-1}$, while 
 the stabilizer of $T_{i_1, i_2}$ in $H_{n, 2}$ equals
$$
H_{n, 2, T_{i_1, i_2}}= \{ (A, 1) \mid A \in U_{n-2} \text{ with    $C_{i_1}(A)$ and $ C_{i_2}(A)$ being  trivial} \} \cong P_{n-2, i_1} \cap P_{n-2,i_2}.
 $$
On the other hand if  $i_1 < i_2$, then   $|O_{T_{i_1, i_2}}|=q^{i_1+i_2-2}$, while
the stabilizer $H_{n,2, T_{i_1,i_2}}$ of $T_{i_1, i_2}$ in $H_{n, 2} = U_{n-2} \times U_{2}$ consists of all matrices
 $(A, B) \in U_{n-2} \times U_{2}$ that satisfy $a_{i_1,i_2}= -f_1/f_2 \cdot b_{1, 2}$ while $C_{i_1}(A)$ is a trivial column and 
$a_{x,i_2}= 0$ for all $i_1 \ne x =1, \dots, i_2-1$. 
For $1 \leq i_1 < i_2 \leq n$ we define 
\begin{equation}\mylabel{defq}
Q_{n,i_1,i_2}= \{ A \in U_{n} \mid a_{y,i_1}=0= a_{x, i_2}, \text{ for all $ i_1 \ne x=1, \dots, i_2-1$ and  $ y= 1,  \dots, i_1-1$}  \}.
\end{equation}
Then it is easy to see that
$H_{n,2, T_{i_1,i_2}} \cong Q_{n-2, i_1, i_2}$.
Finally the zero matrix has orbit length $1$ and its stabilizer in $H_{n,2}$ is $H_{n,2} \cong  U_{n-2} \times \F$.
Collecting all the above  and applying  equation \ref{eq.nk} along with equation \eqref{eq.s2} 
and the fact that $N_k( M \times \F)=|\F| \cdot N_{k}(M)= q N_k(M)$  for  any group $M$,  we get  
\begin{multline}\mylabel{eq.s3}
N_k(U_n)= qN_k(U_{n-1})+N_{\frac{k}{q}}(U_{n-1})- N_{\frac{k}{q}}(U_{n-2})+\\
(q-1)^2\sum_{1 \leq i_2 < i_1 \leq n-2}N_{\frac{k}{q^{i_1+i_2-1}}}(P_{n-2,i_1} \cap P_{n-2, i_2})+ \\
(q-1)^2\sum_{1 \leq i_1 < i_2 \leq n-2}N_{\frac{k}{q^{i_1+i_2-2}}}(Q_{n-2,i_1,i_2}).
\end{multline}
for all $n \geq 4$ and all $k$.
Some of the summands  above  are easy to compute. First observe that $P_{n-2, i} \cap P_{n-2,n-2}\cong P_{n-3,i}$ for all $i=1, \dots, n-3$, and all $n \geq 5$.  Thus \eqref{eq.s2} implies  
$$(q-1)^2\sum_{i=1}^{n-3}N_{\frac{k}{q^{n-3+i}}}(P_{n-2,i} \cap P_{n-2, n-2})=
(q-1)[N_{\frac{k}{q^{n-2}}}(U_{n-2}) - N_{\frac{k}{q^{n-2}}}(U_{n-3})].
$$
 In addition, $P_{n-2, i } \cap P_{n-2, 1} = P_{n-2, i}$, for all $i=2, \dots, n-3$ and all 
$n \geq 5$.
Hence 
$$(q-1)^2\sum_{i=2}^{n-3}N_{\frac{k}{q^{i}}}(P_{n-2,i} \cap P_{n-2, 1})=
(q-1)[N_{\frac{k}{q}}(U_{n-1}) - qN_{\frac{k}{q}}(U_{n-2})- (q-1)N_{\frac{k}{q^{n-2}}}(U_{n-3})].
$$
Furthermore, for all $i=1, \dots, n-3$  and all $ n \geq 5$, 
 the group $Q_{n-2, i, n-2}$ is isomorphic to $P_{n-3, i} \times \F$. This along with \eqref{eq.s2}  implies
\begin{equation}\mylabel{eq.s3.3}
(q-1)^2\sum_{i=1}^{n-3}N_{\frac{k}{q^{n-4+i}}}(Q_{n-2,i,n-2})=
q(q-1)[N_{\frac{k}{q^{n-3}}}(U_{n-2}) - N_{\frac{k}{q^{n-3}}}(U_{n-3})].
\end{equation}
We finally observe that $Q_{n-2, 1, 2} = U_{n-2}$.
Replacing all the above in  equation \eqref{eq.s3}  we get 
\begin{multline}\mylabel{eq.s4}
N_k(U_n)= qN_k(U_{n-1})+qN_{\frac{k}{q}}(U_{n-1})- qN_{\frac{k}{q}}(U_{n-2}) +\\
(q-1)[N_{\frac{k}{q^{n-2}}}(U_{n-2})-qN_{\frac{k}{q^{n-2}}}(U_{n-3})
+qN_{\frac{k}{q^{n-3}}}(U_{n-2})-  qN_{\frac{k}{q^{n-3}}}(U_{n-3})]+\\
(q-1)^2\sum_{2 \leq i_2 < i_1 \leq n-3}N_{\frac{k}{q^{i_1+i_2-1}}}(P_{n-2,i_1} \cap P_{n-2, i_2})+ \\
(q-1)^2\sum_{1 \leq i_1 < i_2 \leq n-3  \text{ and } (i_1, i_2) \ne (1,2)}
N_{\frac{k}{q^{i_1+i_2-2}}}(Q_{n-2,i_1,i_2}), 
\end{multline}
for all $n \geq 5$ and all $k$.
Note that in the equation above, the sum $i_1+i_2$ is greater or equal to 5 when $i_2 < i_1$, while 
$i_1+i_2\geq 4$ when $i_1 < i_2 $.

\section{ Linear characters of  $P_{n, i}$ and $Q_{n,i,j}$ }\mylabel{section.p}
The aim of this section is to compute the number of linear characters of $P_{n, i}$ and $Q_{n,i,j}$.
These groups are examples of \textit{ pattern groups }, a term introduced by  M. Isaacs.
We give here the basic definitions and properties we need, for more details the reader could see \cite{isa2}. 

Let   $\p$  be a subset of the set of pairs $\{ (i, j) \mid  1 \leq i < j \leq n \}$. 
 $\p$ is called a \textit{closed pattern} if it has the property  that $(i, k) \in \p$ 
whenever $(i, j) , (j, k) \in \p$, for some $j\in \{i+1, \ldots , k-1\}$.
The set of unitriangular matrices $X \in U_n$ whose nonzero entries are restricted to lie at positions in the pattern $\p$  is a subgroup of $U_n$ called a \textit{pattern group}. If $G$ is a pattern group corresponding to the closed pattern $\p$ with $|\p|=k$, then $G$ is generated by the matrices $I_n +aE_{i,j}$, $(i,j) \in \p$, $a \in \F^*$ and $|G|=|\F|^k$.

Direct computations show that $[I_n +aE_{i,j}, I_n+bE_{l,k}]=I_n + abE_{i,k}$ if $j=l$ and $I_n$  otherwise.
A pair $(i,j) \in \p$ is called \textit{minimal} if it is not possible  to find numbers $j_1< j_2< \ldots < j_l, l \geq 1$, such that $(i, j_1), (j_1, j_2), \dots, (j_l, k) \in \p$. Then $G'$ is the pattern group
 associated  to $\p_0 = \{ (i, k) \in \p \mid (i, k) \text{ is not minimal} \}$. 
 Thus $|G:G'|=q^t$, where $t$ is the number of minimal pairs in $\p$ (see Theorem 2.1 in \cite{isa2}). 

For the group $P_{n, i}$,  $n \geq 3, i \leq n-1$   observe that there are $n-1$  minimal pairs:
$(k, k+1), k \ne i-1,  1\leq k \leq n-1$ and  $(i-1, i+1)$. Therefore 
\begin{equation}\mylabel{s4.3}
 N_1(P_{n, i}) = q^{n-1}.
\end{equation}

For the group $Q_{n, i, i+1}$ with  $2 \leq i \leq n-2$ there are $n-1$ minimal pairs:
$(k, k+1), k \ne i-1, 1 \leq k \leq n-1$  and   $(i-1, i+2)$.
For the group $Q_{n, i, j}$ with  $1 < i < j-1 \leq n-1$, there are $n$ minimal pairs:
$(k, k+1), k \ne i-1, j-1, 1 \leq k \leq n-1$  and  $(i-1, i+1), (i,j),  (j-1, j+1)$. Therefore 
\begin{equation}\mylabel{hg}
N_1(Q_{n,i,j})=  \begin{cases}   
q^{n-1} \text{ if $i=j-1$ } \\
q^{n} \text{ if $i <j-1$ }.\\
\end{cases}
\end{equation}

\section{ Computing $N_q(P_{n,2})$ }
With the aid of  $N_1(P_{n, i})$ and $N_1(Q_{n,i, j})$ we  
 give the recursive formulas for $N_k(U_n)$ when $k=q$ and $k= q^2$ and compute $N_q(P_{n,2})$.
For $k=q$ and $n \geq 5$,  equation \eqref{eq.s2}, implies
\begin{equation}\mylabel{eq.k=q}
N_q(U_n)= qN_q(U_{n-1}) +(q-1)N_1(P_{n-1,2})= qN_q(U_{n-1})+ q^{n-2}(q-1).
\end{equation}
It is straight forward to see that  $N_q(U_3)= q-1$ while $N_q(U_4)= q(q-1)(q+1)$.  So the formula 
$N_q(U_n)=q^{n-3}(q-1)((n-3)q+1)$  for $N_q(U_n)$ obtained by both Marjoram \cite{mar} and Isaacs \cite{isa2}
satisfies \eqref{eq.k=q}.

For $k=q^2$ and $n=5$ equation \eqref{eq.s4} implies $N_{q^2}(U_5)= q(q-1)(2q^2+q-1)$.
In addition, for all $n \geq 6$ we have
 \begin{multline}\mylabel{k=q^2.rec}
N_{q^2}(U_n)=qN_{q^2}(U_{n-1}) +qN_q(U_{n-1})- qN_q(U_{n-2})+(q-1)^2N_1(Q_{n-2, 1, 3})=\\\ qN_{q^2}(U_{n-1})+q^{n-4}(q-1)[q^3+(n-5)q^2-(n-6)q -1].
\end{multline}
It is straight forward to check that the above recursive formula is satisfied by 
\begin{equation}\mylabel{k=q^2}
N_{q^2}(U_n)= q^{n-4}(q-1)\{(n-5)q^3+(\frac{(n-5)(n-4)}{2}+2)q^2+
[1-\frac{(n-6)(n-5)}{2}]q -n+4\}.
\end{equation}
On the other hand equation  \eqref{eq.s2} for $k=q^2$ and $n \geq 5$,  implies
\begin{equation}\mylabel{e10}
N_{q^2}(U_n)= qN_{q^2}(U_{n-1})+(q-1)[q^{n-2}+ N_{q}(P_{n-1,2})].
\end{equation} 
Combining the above with  \eqref{k=q^2.rec} we get  
\begin{equation}\mylabel{nqp42}
 N_q(P_{4, 2})= \frac{1}{q-1}(N_{q^2}(U_5)-qN_{q^2}(U_4))- q^3= q(q^2-1),
\end{equation}
while for $n \geq 6$ 
\begin{equation}\mylabel{nqpn2}
N_q(P_{n-1, 2}) = q^{n-4}(q-1)[q^2+ (n-5)q +1].
\end{equation}

\section{The group $Q_{n, 1, 3}$.}
The aim in this section is to compute $N_q(Q_{n, 1, 3})$, for all $n \geq 4$. We  point out that we are not able  to compute the number of irreducible characters of degree $q$ for every group $Q_{n, i, j}$ where $i,$ and $j$ are arbitrary. But we can do it for the group $Q_{n, 1, 3}$, and this is enough for the computation of $N_{q^3}(U_n)$.

Assume that $n \geq 4$. Note that, according  to its  definition, $Q_{n, 1, 3}$ consists of all $n \times n$ unitriangular matrices whose $(2,3)$-entry
is zero. We write $Q_{n, 1, 3}$  as a semidirect product using the following groups.
Let $M$ be the subgroup  of $Q_{n, 1, 3}$ consisting of matrices 
all of whose non-diagonal elements are zero except for  the first row. Assume further that 
$H$ is  the subgroup  of $Q_{n, 1, 3}$ consisting of matrices whose non-diagonal entries in the first  
row are zero. Then it is clear that $M$ is an  abelian normal subgroup of $Q_{n, 1, 3}$
isomorphic to $\F^{1 \times n-1 } \cong \F^{n-1}$. Observe that 
 $H$  is isomorphic to  $ P_{n-1, 2}$. Furthermore,  $Q_{n,1,3}= M \rtimes H$ and  the conjugation
 action of $H$ on $M$ is given as 
$$
\left(\begin{matrix}
1 &0   \\
0  &X^{-1} 
      \end{matrix}
\right)
\left(\begin{matrix}
1 &C \\
0 &I_{n-1}
      \end{matrix}
\right)
\left(\begin{matrix}
1 &0 \\
0 &X
      \end{matrix}
\right)
=
\left(\begin{matrix}
1 &CX \\
0 &I_{n-1}
      \end{matrix}
\right), 
$$
where $X \in P_{n-1, 2}$ and $C \in \F^{1 \times n-1}$.

Now we apply  Proposition \ref{pr1} to the group $M  = \F^{1 \times n-1 }$ (with  $M$ in the place of $M_{n, t}$ for $t=1$).  
So the group of irreducible characters  $\Irr(M)$ of $M$ is isomorphic to the abelian additive group $\F^{n-1\times 1}$. 
Thus we regard the irreducible characters of $M$ as column  vectors  over $\F$, and  for every $\chi \in \Irr(M)$ we write
$\chi = (\chi_1, \ldots ,  \chi_{n-1})^t $  with $\chi_i \in \F$. 
Under the isomorphism between $\Irr(M)$ and $\F^{n-1 \times 1}$ the action of an element $\left(\begin{matrix}
1 &0 \\
0 &X
      \end{matrix}
\right)$  becomes multiplication on the left by $X^{-1}$.
It is straightforward to see that the $H$-invariant irreducible characters of $M$ are 
those with $\chi_3=\chi_4= \ldots = \chi_{n-1}=0$, and thus they look like 
$(\chi_1, \chi_2, 0, \ldots, 0)^t$ with $\chi_1, \chi_2 \in \F$. Hence we get $q^2$ such irreducible characters.

Furthermore, if $\chi= (
\chi_1, \ldots, \chi_{n-1})^t
\neq (0, \ldots, 0)^t$ 
is any character  in $\Irr(M)$,  and  $k$ is the biggest index with $\chi_k \ne 0$, then if $k=1, 2$ the character  $\chi $
 is $H$-invariant , while if $k \geq 3$ the $H$-orbit of $\chi$ contains all the characters of type
$(f_1, \ldots, f_{k-1}, \chi_k, 0,  \ldots,  0)^t$, 
where $f_i \in \F$ are arbitrary. 
Hence we get orbits of length $q^{k-1}$.
Therefore for any  $\chi \in \Irr(M)$ either $\chi$ is $H$-invariant or its stabilizer $H_{\chi}$  in $H$ has index at least $q^2$. 
That is,  there are no irreducible characters in $\Irr(M)$ whose stabilizer in $H$ has index $q$ in $H$. 

Now we follow the argument after equation \eqref{orbits}, for the group $Q_{n, 1, 3} = M \rtimes H$, to get  
 $N_q(Q_{n, 1, 3})= q^2 \cdot N_q(P_{n-1, 2})$, 
for all $n \geq 4$. 
If $n =4$ then  $P_{3, 2} \cong \F^2$  and thus 
\begin{equation}\mylabel{nq413}
 N_q(P_{3, 2})= N_q(Q_{4, 1, 3})= 0.
\end{equation}
If $n =5$ then  in view of \eqref{nqp42}  we get 
\begin{equation}\mylabel{nq513}
N_q(Q_{5,1,3})= q^3(q^2-1)
\end{equation}
In addition, for all $n \geq 6$, we use \eqref{nqpn2} to get  
\begin{equation}\mylabel{nq13}
N_q(Q_{n, 1, 3})= q^{n-2}(q-1)[q^2+(n-5)q +1].
\end{equation}

\section{Computing $N_{q^3}(U_n)$.}
For $n=5$,  equation \eqref{eq.s4} implies that $N_{q^3}(U_5)=q(q-1)(2q-1)$.
Furthermore, when $k=q^3$ and $n=6$ equation \eqref{eq.s4} along with \eqref{nq413} and \eqref{hg} implies 
$$
N_{q^3}(U_6)=q^2(q-1)(4q^2+q-3).
$$
For the case $n =7$ we similarly get 
\begin{equation}\mylabel{n=7q3}
N_{q^3}(U_7)=q^2(q-1)[3q^4+6q^3-2q^2-5q +1].
\end{equation}

In general, for all $n \geq 8$  equation \eqref{eq.s4} implies 
\begin{multline}\mylabel{q^3.1}
N_{q^3}(U_n)=qN_{q^3}(U_{n-1})+qN_{q^2}(U_{n-1})-qN_{q^2}(U_{n-2})+\\
(q-1)^2[N_q(Q_{n-2,1,3})+N_1(Q_{n-2,2,3})+N_1(Q_{n-2,1,4})]. 
\end{multline}
According to \eqref{k=q^2}, for all $n \geq 8$ we get 
\begin{multline}\mylabel{q^3.2}
N_{q^2}(U_{n-1})- N_{q^2}(U_{n-2})= q^{n-6}(q-1)\{(n-6)q^4+[\frac{(n-6)(n-5)}{2}-(n-7)+2]q^3-\\
[(n-7)(n-6)+1]q^2+ [4-n+\frac{(n-8)(n-7)}{2}]q +n-6\}
\end{multline} 
Furthermore, using \eqref{hg} and   \eqref{nq13} along with \eqref{q^3.2} in equation \eqref{q^3.1}  and we get 
\begin{multline}\mylabel{q^3.3}
N_{q^3}(U_n)=qN_{q^3}(U_{n-1})+q^{n-5}(q-1)\{ q^5 +(2n-14) q^4 + [25-3n +\frac{(n-6)(n-5)}{2}]q^3 +\\
[n-11-(n-7)(n-6)]q^2+[5-n+\frac{(n-8)(n-7)}{2}]q +n-6\},
\end{multline}
for all $n \geq 8$. As we have already computed the formula for $N_{q^3}(U_7)$, we can easily check 
 that the following equation satisfies the recursive formula \eqref{q^3.3} for all $n \geq 8$.
\begin{equation}{\mylabel{q^3.4}}
N_{q^3}(U_n)=q^{n-5}(q-1)\{A_n q^5 +B_n q^4 + C_n q^3 +
D_n q^2+E_n q +F_n\},
\end{equation}
where
\begin{itemize}
 \item $A_n=n-7$\\
\item $B_n=3+(n-7)(n-6)$\\
\item $C_n=40(n-7) -\frac{17}{4}(n+8)(n-7)+\frac{1}{12}n(n+1)(2n+1)-64$\\
\item $D_n= (n-7)(7n+3) -\frac{1}{6} n(n+1)(2n+1)+138$\\
\item $E_n=(n-7)(-\frac{17}{4}n-1)+\frac{1}{12} n(n+1)(2n+1)-75$ \\
\item $F_n= 1+\frac{(n-7)(n-4)}{2}$.
\end{itemize}
It is clear that the polynomials  $A_n, B_n$ and $F_n$ in $n$ are integer valued for every $n$.
To show that the same holds for the polynomials $C_n, D_n$ and $E_n$ we make use of the following lemma.
\begin{lemma}\mylabel{lem.f}
 Let $P(n)$ be a polynomial in $n$ of degree $m$  with rational coefficients.
If $P(n)$  is an integer for $m+1$ consecutive integers, then the polynomial is integer valued. 
\end{lemma}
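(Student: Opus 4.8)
The plan is to prove the statement by using the theory of finite differences, specifically the forward difference operator. Let $\Delta$ denote the operator defined by $(\Delta P)(n) = P(n+1) - P(n)$. The key observation is that if $P(n)$ has degree $m$, then $(\Delta P)(n)$ has degree $m-1$, and more generally $(\Delta^{m+1} P)(n) \equiv 0$. Moreover, there is the standard identity expressing the forward differences in terms of values:
\begin{equation*}
(\Delta^{k} P)(n) = \sum_{j=0}^{k} (-1)^{k-j} \binom{k}{j} P(n+j).
\end{equation*}

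First I would set up an induction on the degree $m$ of $P$. The base case $m=0$ is trivial: $P$ is constant, and being an integer at one point forces it to be an integer everywhere. For the inductive step, suppose the result holds for all polynomials of degree less than $m$, and let $P(n)$ be a polynomial of degree $m$ taking integer values at $m+1$ consecutive integers, say $n_0, n_0+1, \ldots, n_0+m$. Consider $Q(n) = P(n+1) - P(n) = (\Delta P)(n)$. Then $Q$ has degree $m-1$, and since $P$ is an integer at each of $n_0, \ldots, n_0+m$, it follows that $Q$ is an integer at each of the $m$ consecutive integers $n_0, n_0+1, \ldots, n_0+m-1$. By the inductive hypothesis, $Q$ is integer valued everywhere. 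Now, for any integer $n$ we can write $P(n)$ as $P(n_0)$ plus a telescoping sum of values of $Q$ (summing from $n_0$ up to $n-1$ if $n > n_0$, or using the analogous formula with $Q$ at the intervening integers if $n < n_0$); since $P(n_0)$ is an integer and each term $Q(k)$ is an integer, $P(n)$ is an integer. This completes the induction.

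The main point requiring a little care — though not a genuine obstacle — is the bookkeeping in the telescoping/reconstruction step: one must handle both $n \geq n_0$ and $n < n_0$, and verify that the finite sum $\sum_{k=n_0}^{n-1} Q(k)$ (respectively $-\sum_{k=n}^{n_0-1} Q(k)$) indeed equals $P(n) - P(n_0)$, which is immediate from the definition $Q(k) = P(k+1) - P(k)$ and cancellation. An alternative, perhaps cleaner, formulation avoids the induction entirely: express $P$ in the Newton forward-difference basis as
\begin{equation*}
P(n) = \sum_{k=0}^{m} (\Delta^{k} P)(n_0) \binom{n - n_0}{k},
\end{equation*}
valid since both sides are polynomials of degree $\leq m$ agreeing at the $m+1$ points $n_0, \ldots, n_0+m$; then observe that each coefficient $(\Delta^{k} P)(n_0)$ is an integer combination of the values $P(n_0), \ldots, P(n_0+k)$, hence an integer, and each binomial coefficient $\binom{n-n_0}{k}$ is an integer for every integer $n$. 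Either route is short; I would present the induction, as it is the most self-contained and requires no appeal to the Newton interpolation formula.

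With Lemma \ref{lem.f} in hand, the application to $C_n$, $D_n$ and $E_n$ is routine: each is a polynomial in $n$ of degree $3$ with rational coefficients, and one checks directly that it takes integer values at four consecutive integers (for instance $n = 7, 8, 9, 10$, using the already-verified formulas for $N_{q^3}(U_n)$ in those cases together with \eqref{q^3.4}), whence the lemma guarantees integrality for all $n$.
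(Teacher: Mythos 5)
Your proof is correct and follows essentially the same route as the paper: induction on the degree $m$ via the difference polynomial $Q(n)=P(n+1)-P(n)$, with the inductive hypothesis applied to $Q$ and a telescoping argument to recover the integrality of $P$ at all integers. The extra detail you give (the two-sided telescoping and the alternative Newton forward-difference formulation) is fine but not needed beyond what the paper's own argument contains.
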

\begin{proof}
We will use induction on the degree $m$ of $P(n)$.
It is clear that for $m=1$ holds.

Assume it holds for all polynomials of degree less that $m$,  we will show that it also holds 
 for those of degree $m$. 
The polynomial $Q(n):= P(n+1) -P(n)$  has degree smaller than $m$. In addition, if $P$
has integer values for $m+1$  consecutive  integers $k, k+1, \dots, k+m$, then $Q(n)$ 
is  integer valued for  the $m$ consecutive integers $k, k+1, \dots, k+m-1$. 
Therefore the inductive hypothesis implies that $Q(n)$ is integer valued for every $n$. 
This along with the fact that $P(n)$ is integer valued for $n=k+m$, implies that $P(n)$ is  an integer for every $n$. 
\end{proof}

Now,  it is straight forward to check 
that $C_7, C_8, C_9$ and $C_{10}$ are integers. Hence the above lemma implies that 
 $C_n$ is an integer valued polynomial. Similarly   we show that $D_n$ and $E_n$ are integer valued. 
Hence $N_{q^3}(U_n)$ is a polynomial in $q$ with integer coefficients.

\end{document}